\documentclass[11pt]{article}
\usepackage{color,amsfonts,amssymb}
\usepackage{amsfonts,epsf,amsmath}
\usepackage{amsthm}
\usepackage{latexsym}
\usepackage{graphicx}
\usepackage{array,float}
\usepackage[margin=2cm]{geometry}
\usepackage{enumitem}
\usepackage[mathscr]{euscript}
\usepackage{array,float}

\usepackage{graphicx,latexsym}
\usepackage{lscape}
\usepackage{multicol}
\usepackage{multirow}
\usepackage{mathrsfs}
\usepackage{graphicx}
\usepackage{lipsum}
\usepackage{subfig}
\usepackage{graphicx}
\usepackage{subfig}
\usepackage{autobreak}
\allowdisplaybreaks
\usepackage{cite}
\usepackage{filecontents}
\usepackage{hyperref}

\newtheorem{thm}{Theorem}

\newtheorem{lem}{Lemma}

\newtheorem{remark}{Remark}

\newtheorem{discu}{Discussion:}

\newtheorem{conje}{Conjecture:}

\usepackage{fancyhdr}
\usepackage{mathtools}
%\pagestyle{fancy}
%\fancyhf{}
%\date{}
\begin{document}
	\title{\textbf{Edge Metric Dimension of Silicate Networks}}
\author{\begin{tabular}{rcl}
		\textbf{S. Prabhu$^{\text a, }$\thanks{Corresponding author: drsavariprabhu@gmail.com}, T. Jenifer Janany$^{\text a}$}
	\end{tabular}\\
	\begin{tabular}{ccc}
		\small$^{\text a}$Department of Mathematics, Rajalakshmi Engineering College, Chennai 602105, India \\
	\end{tabular}}
	\maketitle
	\vspace{-0.5 cm}
	\begin{abstract}
	\baselineskip16pt
		Metric dimension is an essential parameter in graph theory that aids in addressing issues pertaining to information retrieval, localization, network design, and chemistry through the identification of the least possible number of elements necessary to identify the distances between vertices in a graph uniquely. A variant of metric dimension, called the edge metric dimension focuses on distinguishing the edges in a graph $G$, with a vertex subset. The minimum possible number of vertices in such a set is denoted as $\dim_E(G)$. This paper presents the precise edge metric dimension of silicate networks.
	\end{abstract}
	
	\medskip\noindent
	\textbf{Keywords:} Edge metric basis, Silicate, Twins, Tetrahedron, Chain silicate, Cyclic silicate
	
	\medskip\noindent
	\textbf{Mathematics Subject Classification (2020):} 05C69, 05C12
	
	%%%%%%%%%%%%%%%%%%%%%%%%%%
	\section{Introduction}
	%%%%%%%%%%%%%%%%%%%%%%%%%%
	Metric dimension is a measure of how efficiently one can locate and distinguish between the vertices (nodes) of a graph using a minimal set of landmarks or reference points \cite{Sl75, HaMe76}. Metric dimension has applications in diverse domains, particularly network design \cite{BeEbEr06}, chemistry \cite{Jo93, ChErJo00}, robotics \cite{KhRaRo96}, and location-based services. Calculating the exact metric dimension of a graph is often a computationally challenging problem. For particular class of graphs, such as trees, there are efficient algorithms to find the metric dimension. However, for general graphs \cite{KhRaRo96}, bipartite graphs \cite{MaAbRa08} and directed graphs \cite{RaRaCy14}, this problem remains NP-hard. Despite the computational difficulty, the precise value of metric dimension is evaluated for many graph structures including honeycomb \cite{MaRaRa08}, TiO$_2$ nanotubes \cite{PrFlAr18}, butterfly \cite{MaAbRa08}, benes \cite{MaAbRa08}, Sierpi\'{n}ski \cite{KlZe13}, silicate \cite{MaRa11}, and irregular triangular networks \cite{PrJeAr23}.
	
	The distance $d_G(u,v)$ between two vertices $u$ and $v$ in a connected graph $G$, where $V(G)$ is the set of vertices and $E(G)$ is the set of edges, is defined as the minimum number of edges in any shortest path (geodesic) connecting them. The distance between an edge $e= vw$ and a vertex $u$ is given by, $d_G(e,u) = \min \{d_G(v,u), d_G(w,u)\}$. Based on the distance from vertices of an ordered subset $X= \{u_1, u_2, \ldots, u_l\}$, every vertex $v$ is represented with a vector of distance, $$ r(v| X) = (d_G(v,u_1), d_G(v, u_2), \ldots, d_G(v, u_l)).$$ The subset $X$ is a resolving set (\textbf{RS}) if $r(x|X) \neq r(y|X)$, $\forall x,y \in V(G)$. To put it in another way, $X$ is a \textbf{RS} if for every $x,y \in V(G)$, a vertex $u \in X$ exists such that distances from $x$ and $y$ to $u$ are unequal. See Figure \ref{defn}(a). We use the term (metric) basis to represent a \textbf{RS} with minimum vertices and metric dimension (\textbf{MD}), $\dim (G)$ to indicate its cardinality.
	\begin{figure}[H] 
		\centering
		\subfloat[]{\includegraphics[scale=.4]{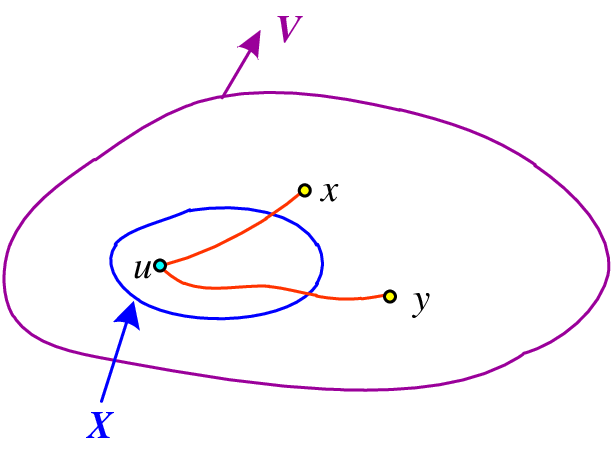}} 
		\quad 	\quad 	\quad 	\quad
		\subfloat[]{\includegraphics[scale=.4]{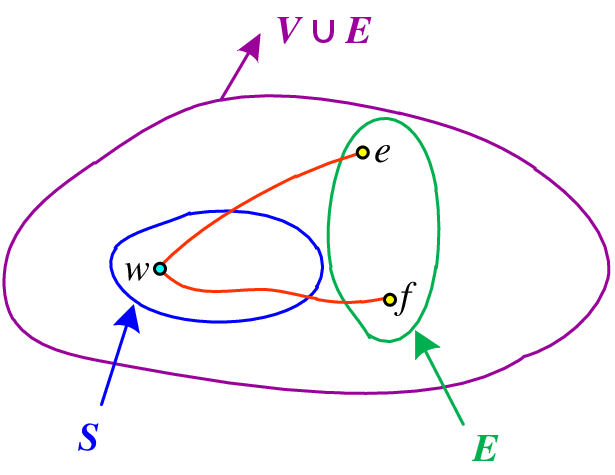}}
		\caption{(a) Resolving set (\textbf{RS}); (b) Edge resolving set (\textbf{ERS})} \label{defn}
	\end{figure}
	The vertices of basis distinguishes the vertices in the graph and not necessarily the edges. This fact ignited the spark of edge metric basis (\textbf{EMB}). A vertex subset $S$, is called an edge resolving set (\textbf{ERS}) when we have a vertex $w \in S$ for every pair of edges $e,f$, such that $d_G(w,e)$ and $d_G(w,f)$ are different. See Figure \ref{defn}(b). \textbf{EMB} is a vertex subset which resolves the edges of the graph with minimum possible vertices and the cardinality of \textbf{EMB} is the edge metric dimension (\textbf{EMD}), $\dim_E(G)$. This idea was conceptualize in \cite{KeTrYe18} by Kelenc et al. and is concluded as NP-complete. In the aftermath of this concept's origin a number of articles has outbroken in this field. To list a few, we have the characterization of graphs with maximum \textbf{EMD} \cite{Zu18,ZhTaSh19}, the \textbf{EMD} of grid graphs \cite{KeTrYe18}, web graph, prism related graph, convex polytope antiprism, convex polytope graph \cite{ZhGa20}, benzenoid tripod structure \cite{AhHuAz21}, honeycomb network, hexagonal network \cite{AbRaSi22}, Erdos-Renyi random graph \cite{Zu21}, and generalized Petersen graph $P(n, 3)$ \cite{WaWaZh22}. The research regarding this realm has exapnded to graph operations such as join, lexicographic, corona \cite{PeYe20}, and hierarchical products \cite{KlTa21}. Identifying graphs with \textbf{EMD} lesser than \textbf{MD} received specific attention \cite{KnSkYe22, KnMaTo21}.
	
	\section{Silicate Networks}
	Silicates are among the most abundant minerals on Earth and are the primary components of many materials, including rocks, minerals, and glasses. A silicon atom joined with equally spaced oxygen atoms when oxygen atoms are placed at the four corners of a tetrahedron produces a silicate tetrahedron. The basic building block of silicate networks is the silicate tetrahedron, $SiO_4$. See Figure \ref{silicate}. In a silicate network (or sheet), each oxygen atom is shared between adjacent tetrahedra, leading to the formation of a continuous network. The silicate network was perceived as an interconnection network, inspired by the molecular structure of $SiO_4$. Various properties, including metric dimension \cite{MaRa11}, topological descriptors \cite{HaIm14, LiWaWa17}, power domination \cite{StRaRy15}, and fault-tolerant metric dimension \cite{PrMaAr22} have been discussed.
	\begin{figure}[H] 
		\centering
		\includegraphics[scale=.5]{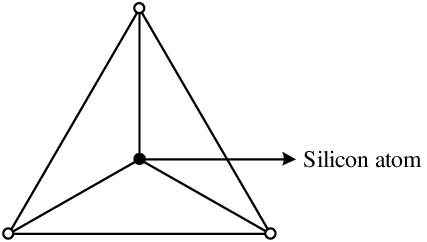}
		\caption {Silicate tetrahedron} 
		\label{silicate}    
	\end{figure}
	When a silicate tetrahedron connects with other tetrahedra linearly, a single-row silicate chain is created. A chain silicate with $n$ tetrahedra is denoted by $CS_n$. It has $3n+1$ vertices and $6n$ edges. See figure \ref{cs}.
	\begin{figure}[H] 
		\centering
		\includegraphics[scale=.6]{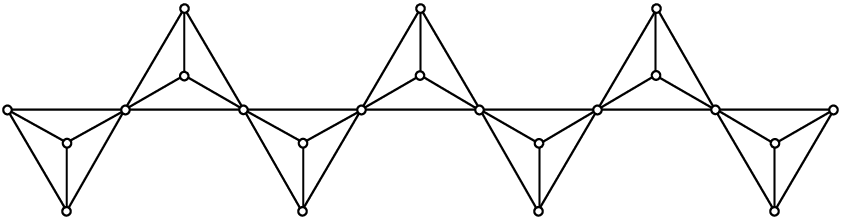}
		\caption {Chain silicate $CS_7$} 
		\label{cs}
	\end{figure}
	
	Cyclic silicates are a specific type of silicate mineral or compound characterized by a ring-like structure. In cyclic silicates, the silicate tetrahedra are arranged in closed loops or rings, giving rise to distinct structural and chemical properties. A cyclic silicate $CC_n$ can be obtained from a cycle of length $n$, by replacing each edge with a tetrahedron. $CC_n$ has $3n$ vertices and $6n$ edges. See Figure \ref{cc}.
	\begin{figure}[H] 
		\centering
		\includegraphics[scale=.6]{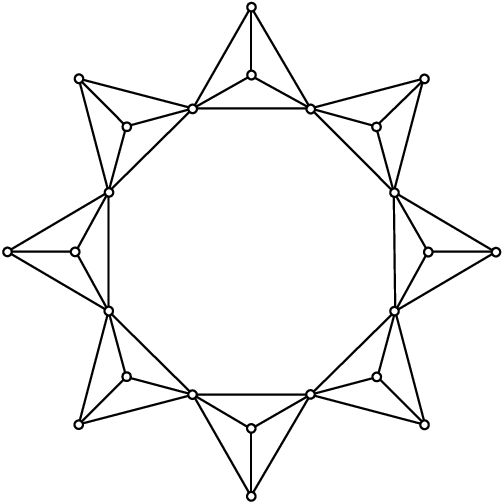}
		\caption {Cyclic silicate $CC_8$} 
		\label{cc}
	\end{figure}
	
	\section{Main Results}
	A one point union of two tetrahedrons is called a twin tetrahedron in silicate network. In $CS_n$ we have twin tetrahedrons with five 3-degree vertices and some with four 3-degree vertices. See Figure \ref{tt}.
	\begin{figure}[H] 
		\centering
		\subfloat[]{\includegraphics[scale=0.75]{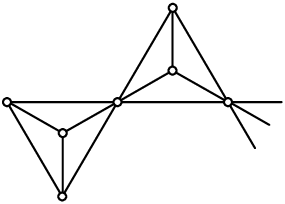}} 
		\quad \quad \quad   
		\subfloat[]{\includegraphics[scale=0.75]{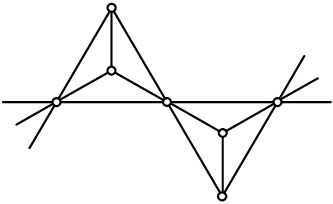}} 
		\caption {Twin tetrahedrons} \label{tt}
	\end{figure}
	Let us define a cubic set, $C$ to denote the set of 3-degree vertices in a twin tetrahedron. We have the following lemma regarding the role of cubic vertices in \textbf{ERS} of silicate networks.
	%\begin{lemma}
	%   Let $G$ be any silicate network and $S$ be any edge resolving set of $G$. Then $|C \setminus S| \leq 1$ for every twin tetrahedron i.e, for every twin tetraherdon at most one vertex of $C$ can be excluded from $S$.
	%\end{lemma}
	\begin{lem} \label{lbd}
		Let $G$ be any silicate network and $S$ be an \textbf{ERS} of $G$. Then for every twin tetraherdon at most one vertex of $C$ can be excluded from $S$ i.e, $|C \setminus S| \leq 1$.
	\end{lem}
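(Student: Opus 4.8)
The plan is to argue by contradiction: assuming two cubic vertices of a single twin are omitted from $S$, I will exhibit a pair of edges that $S$ cannot resolve. First I would fix notation for the twin, writing it as two tetrahedra $T_1,T_2$ (each a copy of $K_4$, since a single tetrahedron is the graph $CS_1$ with $4$ vertices and $6$ edges) glued at the single vertex $o$, which is the point of the one-point union and therefore has degree $6$. Each cubic vertex lies in exactly one of $T_1,T_2$: a degree-$3$ vertex cannot be a shared vertex, so its three neighbours are precisely its tetrahedron-mates. In particular every cubic vertex is adjacent to $o$, because $o$ belongs to both copies of $K_4$.

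The technical heart of the argument is a distance inequality that I would isolate as a small claim: if $u$ is a cubic vertex and $o$ is a neighbour of $u$ inside its tetrahedron, then $d_G(w,u)\ge d_G(w,o)$ for every $w\neq u$. To prove it, take a shortest $w$--$u$ path and let $p$ be the vertex immediately preceding $u$; since $\deg u=3$, $p$ is a tetrahedron-mate of $u$, so either $p=o$ or $p$ is adjacent to $o$ (the tetrahedron is complete). In the first case $d_G(w,u)=d_G(w,o)+1$, and in the second $d_G(w,o)\le d_G(w,p)+1=d_G(w,u)$; either way the inequality holds. The content of this claim is that a cubic endpoint is never strictly closer to $w$ than the centre $o$, so in any edge-distance to an edge incident with $o$ the cubic endpoint is dominated by $o$.

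With the claim in hand, suppose two distinct cubic vertices $u,v$ both lie in $C\setminus S$. Consider the two distinct edges $g=ou$ and $h=ov$, which exist because $o$ is adjacent to every cubic vertex. For any $w\in S$ we have $w\notin\{u,v\}$, so the claim yields $d_G(w,u)\ge d_G(w,o)$ and $d_G(w,v)\ge d_G(w,o)$; hence $d_G(w,g)=\min\{d_G(w,o),d_G(w,u)\}=d_G(w,o)=\min\{d_G(w,o),d_G(w,v)\}=d_G(w,h)$. Thus no vertex of $S$ distinguishes $g$ from $h$, contradicting that $S$ is an \textbf{ERS}. Therefore at most one cubic vertex can be excluded, i.e. $|C\setminus S|\le 1$.

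I expect the main obstacle to be the distance claim, and in particular making it robust enough to cover every silicate network rather than only the chain: the statement must hold whether the cubic vertex's tetrahedron sits at the end of a chain, in its interior, or inside a two-dimensional sheet where many shortest paths compete. Phrasing the claim purely in terms of the local $K_4$ around $u$ (a shortest path must reach $u$ through a tetrahedron-mate, all of which are adjacent to $o$) is exactly what makes it independent of the global layout, so this local reduction is the step I would take the most care over. A secondary bookkeeping point, where the hypothesis that the vertex is \emph{cubic} is genuinely used, is the verification that a degree-$3$ vertex is confined to a single tetrahedron and is adjacent to the union vertex $o$.
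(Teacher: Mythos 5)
Your argument is correct and follows essentially the same route as the paper: both proofs take the two unresolved edges to be $v_0p$ and $v_0q$ joining the union vertex to the two omitted cubic vertices, and show every other vertex sees both edges at distance $d_G(\cdot,v_0)$. The only difference is that you actually justify the key inequality $d_G(w,u)\ge d_G(w,o)$ via the local $K_4$ structure, which the paper asserts without proof; this is a welcome addition, not a deviation.
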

	\begin{proof}
		Suppose there exists a twin tetrahedron with two cubic vertices $p,q \notin S$. Let $v_0$ be the intersection point of the twin tetrahedron and let $e_1= v_0p$, $e_2 = v_0q$. Then $d(e_1,u)=d(e_2,u)=d(v_0,u)$ for each $u \in V(G)\setminus \{p,q\}$, which contradicts $S$ being an \textbf{ERS}.
	\end{proof}
	Similar to the twin tetrahedrons, we have two types of tetrahedrons, some with three cubic vertices (Type I) and the remaining with two cubic vertices (Type II). See Figure \ref{k4}.
	\begin{figure}[H] 
		\centering
		\subfloat[]{\includegraphics[scale=1]{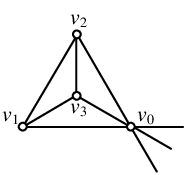}} 
		\quad \quad \quad \quad  
		\subfloat[]{\includegraphics[scale=1]{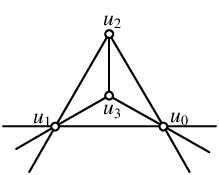}} 
		\caption {Tetrahedrons in silicate network} \label{k4}
	\end{figure}
	
	Consider a type I tetrahedron. Let $v_0, v_1, v_2, v_3$ be the vertices and the edges of the tetrahedron be $e_1= v_0v_1, e_2= v_0v_2, e_3= v_0v_3, e_4= v_1v_2, e_5= v_1v_3, e_6= v_2v_3$. Let $S_1= \{s, v_1, v_2\}$ and $d(s,v_0)=x$. Then
	\vspace{-0.5 cm}
	\begin{align*}
		r(e_1|S_1) &= (x,0,1)\\
		r(e_2|S_1) &= (x,1,0)\\
		r(e_3|S_1) &= (x,1,1)\\
		r(e_4|S_1) &= (x+1,0,0)\\
		r(e_5|S_1) &= (x+1,0,1)\\
		r(e_6|S_1) &= (x+1,1,0)
	\end{align*}
	Similarly, in type II tetrahedron let $u_0, u_1, u_2, u_3$ be the vertices and $e_1= u_0u_1, e_2= u_0u_2, e_3= u_0u_3, e_4= u_1u_2, e_5= u_1u_3, e_6= u_2u_3$ be the edges. If $S_2= \{s,t,u_2\}$, $d(s,u_0)=x$ and $d(t,u_1)=y$ then
	\vspace{-0.5 cm}
	\begin{align*}
		r(e_1|S_2) &= (x,y,1)\\
		r(e_2|S_2) &= (x,y+1,0)\\
		r(e_3|S_2) &= (x,y+1,1)\\
		r(e_4|S_2) &= (x+1,y,0)\\
		r(e_5|S_2) &= (x+1,y,1)\\
		r(e_6|S_2) &= (x+1,y+1,0)
	\end{align*}
	The following remark and lemma are straightforward from the above observations.
	\begin{remark} \label{k4lbd}
		For terahedrons of type I, $|S \cap C|\geq 2$ and for type II, $|S \cap C|\geq 1$, where $S$ is an \textbf{ERS} and $C$ is the set of 3-degree vertices.
	\end{remark}
	\begin{lem} \label{ubd}
		In a silicate network $G$, if for a vertex subset $S$, the following conditions holds, then $S$ is an \textbf{ERS}.
		\begin{enumerate}[label=(\roman*)]
			\item For every twin tetrahedron $|C\setminus S|\leq 1$
			\item For every type I tetrahedron, $|S \cap C|\geq 2$
			\item For every type II tetrahedron, $|S\cap C|\geq 1$
		\end{enumerate}
	\end{lem}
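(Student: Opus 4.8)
The plan is to verify directly that $r(e\mid S)\neq r(f\mid S)$ for every pair of distinct edges $e,f\in E(G)$, leaning on three structural facts about silicate networks: each edge lies in a unique tetrahedron, two distinct tetrahedra meet in at most one shared corner vertex, and every cubic vertex has all of its neighbours inside its own tetrahedron, so that any shortest path leaving a tetrahedron must pass through one of its corners. From the last fact it follows that for a cubic vertex $c$ of a tetrahedron $T$ and any edge $g$ inside $T$ we have $d(c,g)\le 1$, whereas $d(c,g')\ge 1$ for every edge $g'$ outside $T$, with equality forcing $g'$ to be incident to a corner of $T$ adjacent to $c$. I would then split the verification into the case where $e$ and $f$ share a tetrahedron and the case where they lie in two distinct ones.

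For the intra-tetrahedron case I would reuse the two tables computed before the lemma. If the common tetrahedron is of type~I, condition~(ii) puts two of its cubic vertices, say $v_1,v_2$, into $S$; any vertex $s\in S$ lying outside the tetrahedron reaches it through the single corner $v_0$, so $d(s,v_0)=x$ and $d(s,v_i)=x+1$ for $i=1,2,3$, and the resulting entries coincide with the six pairwise-distinct vectors listed for $S_1=\{s,v_1,v_2\}$. If the tetrahedron is of type~II, condition~(iii) puts one cubic vertex $u_2$ into $S$, and external vertices reaching the two corners $u_0,u_1$ reproduce the pairwise-distinct table for $S_2=\{s,t,u_2\}$. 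The only delicate point is the availability of these external vertices; I would draw them from the cubic vertices of neighbouring tetrahedra, which conditions~(ii)--(iii) force into $S$, and treat the single-tetrahedron network as a base case checked by hand.

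For the inter-tetrahedron case let $e\subseteq T_e$ and $f\subseteq T_f$ with $T_e\neq T_f$, and assume $d(w,e)=d(w,f)$ for all $w\in S$ toward a contradiction. Choosing a cubic vertex $c\in S\cap T_e$ (present by (ii)/(iii)) gives $d(c,f)=d(c,e)\le 1$; if $T_e$ and $T_f$ are non-adjacent this is impossible, since then $d(c,f)\ge 2$, so $T_e,T_f$ must share a corner $v_0$ and $f$ must be incident to it. The symmetric choice of a cubic vertex in $S\cap T_f$ forces $e$ to be incident to $v_0$ as well, so $e=v_0p$ and $f=v_0q$ for the twin tetrahedron formed by $T_e$ and $T_f$. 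If either other endpoint, say $p$, is itself a corner, then a cubic vertex of the further tetrahedron sharing $p$ lies in $S$ and already separates $e$ from $f$; hence $p$ and $q$ are cubic vertices of the twin, and since $d(p,e)=0\neq d(p,f)$ whenever $p\in S$ (and symmetrically for $q$), non-resolution forces $p,q\notin S$. This makes $|C\setminus S|\ge 2$ for that twin, contradicting condition~(i), exactly as in Lemma~\ref{lbd}.

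I expect the inter-tetrahedron case to be the main obstacle. The intra-tetrahedron case is a finite check once the corner-gateway behaviour is pinned down, but in the inter-tetrahedron case one must rule out accidental equalities between edges that are genuinely far apart, and the genuinely subtle configuration is that of two adjacent tetrahedra whose shared corner produces the two edges $v_0p,v_0q$ that are equidistant from every vertex of $S$. Reducing this configuration to condition~(i) through Lemma~\ref{lbd}, and carefully separating off the sub-case in which an endpoint is a corner rather than a cubic vertex, is the crux of the argument.
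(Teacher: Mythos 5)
Your decomposition into intra- and inter-tetrahedron pairs is the right one, and it already goes well beyond the paper, which states Lemma \ref{ubd} with no proof at all, declaring it ``straightforward'' from the two distance tables. Those tables only address pairs of edges inside a single tetrahedron, so your reduction of the inter-tetrahedron case to condition (i) --- isolating the configuration $e=v_0p$, $f=v_0q$ at a shared corner and mirroring Lemma \ref{lbd} --- supplies exactly the content the paper omits; it is also the only place where condition (i) is actually used. The intra-tetrahedron part is sound once you observe, as you do, that the external vertices $s,t$ may be taken to be cubic vertices of the neighbouring tetrahedra, which forces them to approach through the appropriate corners and reproduces the two tables.

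There is, however, a genuine gap in the sub-case where the far endpoint $p$ of $e=v_0p$ is itself a corner. Your claim that a cubic vertex $c''$ of the further tetrahedron $T_p$ containing $p$ separates $e$ from $f$ rests on the computation $d(c'',e)=1\neq 2=d(c'',f)$, which requires $T_p$ to contain neither $v_0$ nor $q$, i.e.\ to be disjoint from $T_f$. This holds in $CS_n$ and in $CC_n$ for $n\ge 4$, but fails in $CC_3$, where the three tetrahedra are pairwise adjacent. Concretely, let $x_1,x_2,x_3$ be the three corners of $CC_3$ and take $e=x_1x_2$, $f=x_1x_3$: every degree-$3$ vertex of $CC_3$ is at distance exactly $1$ from both edges, so only the corners $x_2$ and $x_3$ distinguish them. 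Hence the set of all six cubic vertices satisfies (i)--(iii) yet is not an \textbf{ERS}; Lemma \ref{ubd} as stated is false for $CC_3$ (and with it the paper's claimed value of $\dim_E(CC_3)$), so no argument can close this sub-case in full generality. You should either restrict the lemma to silicate networks in which no three tetrahedra are pairwise adjacent --- which covers $CS_n$ and $CC_n$ with $n\ge 4$ and makes your proof complete --- or handle such triangles of tetrahedra by forcing corner vertices into $S$.
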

	%\begin{remark} \label{ubd}
	%If $|S \cap C|\geq 2$ for type I tetrahedron, $|S \cap C|\geq 1$ for type II tetrahedron and at least one cubic vertex of every tetrahedron belongs to $S$ then $S$ is an edge resolving set. 
	%\end{remark}
	\begin{thm}
		For any even integer $n \geq 2$, $\dim_E(CS_n)= \frac{3n}{2}+2$.
	\end{thm}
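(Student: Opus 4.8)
The plan is to prove the equality by squeezing $\dim_E(CS_n)$ between matching lower and upper bounds of $\frac{3n}{2}+2$, using Lemma~\ref{lbd} for the lower bound and Lemma~\ref{ubd} for the upper bound. First I would fix notation: label the tetrahedra along the chain $T_1,T_2,\ldots,T_n$, where $T_1$ and $T_n$ are the Type~I end tetrahedra and $T_2,\ldots,T_{n-1}$ are the Type~II internal ones. Writing $C_i$ for the set of $3$-degree vertices of $T_i$, we have $|C_1|=|C_n|=3$ and $|C_i|=2$ for $2\le i\le n-1$, so the total number of cubic vertices is $\sum_{i=1}^{n}|C_i| = 2n+2$. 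Because consecutive tetrahedra meet only in a shared $6$-degree vertex, the sets $C_i$ are pairwise disjoint, and the cubic set of the twin tetrahedron formed by $T_i$ and $T_{i+1}$ is precisely $C_i\cup C_{i+1}$.

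For the lower bound, let $S$ be an arbitrary \textbf{ERS} and put $x_i=|C_i\setminus S|$. Applying Lemma~\ref{lbd} to the twin tetrahedron $T_i\cup T_{i+1}$ gives $x_i+x_{i+1}=|(C_i\cup C_{i+1})\setminus S|\le 1$ for every $1\le i\le n-1$. Since $n$ is even I would pair the indices as $(1,2),(3,4),\ldots,(n-1,n)$ and sum the inequalities $x_{2k-1}+x_{2k}\le 1$, which yields $\sum_{i=1}^{n}x_i\le \tfrac{n}{2}$. Consequently the number of cubic vertices lying in $S$ is $\sum_{i=1}^{n}\bigl(|C_i|-x_i\bigr)=(2n+2)-\sum_i x_i\ge (2n+2)-\tfrac{n}{2}=\tfrac{3n}{2}+2$, and therefore $|S|\ge \tfrac{3n}{2}+2$.

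For the upper bound I would exhibit a set meeting this value and verify it with Lemma~\ref{ubd}. Take $S$ to be all cubic vertices except one chosen vertex of $C_i$ for each odd index $i\in\{1,3,\ldots,n-1\}$; equivalently, set $x_i=1$ for odd $i$ and $x_i=0$ for even $i$. This omits exactly $\tfrac{n}{2}$ cubic vertices, so $|S|=(2n+2)-\tfrac{n}{2}=\tfrac{3n}{2}+2$. It then remains to check the three hypotheses of Lemma~\ref{ubd}: condition~(i) holds since each consecutive pair satisfies $x_i+x_{i+1}=1$; condition~(ii) holds since the end tetrahedra keep $|S\cap C_1|=2$ (because $x_1=1$) and $|S\cap C_n|=3$ (because $n$ even forces $x_n=0$), both at least $2$; and condition~(iii) holds since every internal $T_i$ retains at least one cubic vertex. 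The two bounds together give $\dim_E(CS_n)=\tfrac{3n}{2}+2$.

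The vertex counting and the Lemma~\ref{ubd} verification are routine; the step that needs the most care is the lower-bound optimisation, where I must argue that the twin constraints $x_i+x_{i+1}\le 1$ genuinely cap the total number of omitted cubic vertices at $\tfrac{n}{2}$. This is exactly where the even parity of $n$ is essential: it allows a clean disjoint pairing of all $n$ indices so that the bound $\sum_i x_i\le n/2$ is tight, and it simultaneously lets the endpoint constraint $|S\cap C_n|\ge 2$ be met without spending an extra vertex, which is why the value $\frac{3n}{2}+2$ is attained precisely in the even case.
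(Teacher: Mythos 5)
Your proposal is correct and follows essentially the same route as the paper: the lower bound comes from applying Lemma~\ref{lbd} to the $\frac{n}{2}$ edge-disjoint twin tetrahedra (your $x_i$ bookkeeping is just a more explicit rendering of the paper's count $2\cdot 4 + (\frac{n}{2}-2)\cdot 3$), and the upper bound is an explicit set of cubic vertices verified by Lemma~\ref{ubd}. The only difference is cosmetic --- your witness set omits one cubic vertex from each odd-indexed tetrahedron while the paper's labeling distributes the omissions slightly differently near the ends --- but both sets have size $\frac{3n}{2}+2$ and satisfy the same three conditions.
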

	\begin{proof}
		When $n$ is even, $CS_n$ can be partitioned into $\frac{n}{2}$ edge-disjoint twin tetrahedrons. Let $C_1, C_2, \ldots, C_{\frac{n}{2}}$ be the corresponding cubic sets. Since $|C_1|=|C_{\frac{n}{2}}|=5$ and $|C_2|=\ldots=|C_{\frac{n}{2}-1}|=4$, by Lemma \ref{lbd}, $\dim_E\geq 3(\frac{n}{2}-2)+ 4(2) = \frac{3n}{2}+2$.\\
		Consider $CS_n$ as a path on $n$ vertices $\{w_1, w_2, \ldots, w_n\}$. Define a mapping $l: \{w_1, w_2, \ldots, w_n\} \rightarrow \{1,2\}$ as follows, $$l(w_1)=l(w_2)=l(w_{n-1})=l(w_n)=2$$
		for $3\leq i \leq n-2$, \vspace{-0.5cm} $$l(w_i)= \begin{cases}
			1, & i \equiv 1\ {\rm mod}\ 2\\
			2, & i \equiv 0\ {\rm mod}\ 2
		\end{cases}$$
		Consider a collection of 3-degree vertices S, with the property that $|S \cap R_i| =l(w_i)$, where $R_i$ is the set of vertices of the tetrahedron corresponding to $w_i$. By Lemma \ref{ubd}, $S$ is an \textbf{ERS} of $CS_n$.
	\end{proof}
	\begin{thm}
		For any odd integer $n \geq 1$, $\dim_E(CS_n)= \frac{3(n+1)}{2}$.
	\end{thm}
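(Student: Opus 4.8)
The plan is to mirror the even case, splitting the argument into a matching lower and upper bound, the key structural difference being that for odd $n$ the chain does not decompose evenly into twin tetrahedrons: there is always one leftover tetrahedron at one end. Throughout let $T_i$ denote the $i$-th tetrahedron in the chain and identify $CS_n$ with the path $w_1,\dots,w_n$ on its tetrahedra.

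\textbf{Lower bound.} First I would decompose $CS_n$ into the $\frac{n-1}{2}$ edge-disjoint twin tetrahedrons $(T_1,T_2),(T_3,T_4),\dots,(T_{n-2},T_{n-1})$ together with the single terminal tetrahedron $T_n$. Because every bridging vertex has degree $6$ while every cubic vertex lies in exactly one $T_i$, the cubic sets of all these pieces are pairwise disjoint. The twin containing $T_1$ has $|C_1|=5$ and each remaining twin has $|C_j|=4$, while $T_n$ shares only one vertex with $T_{n-1}$ and is therefore a Type I tetrahedron. Applying Lemma \ref{lbd} to each twin ($|S\cap C_j|\ge |C_j|-1$) and Remark \ref{k4lbd} to $T_n$ ($|S\cap C|\ge 2$), and adding the disjoint contributions, gives
\[
|S|\ \ge\ (5-1)+(4-1)\Big(\tfrac{n-1}{2}-1\Big)+2\ =\ \tfrac{3(n+1)}{2}.
\]

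\textbf{Upper bound.} Next I would build a set $S$ of cubic vertices attaining this bound and invoke Lemma \ref{ubd}. I assign a label $l(w_i)\in\{1,2\}$ equal to the number of cubic vertices of $T_i$ placed into $S$, forcing $l(w_1)=l(w_2)=l(w_{n-1})=l(w_n)=2$ and alternating $l(w_i)=1$ for odd $i$ and $l(w_i)=2$ for even $i$ on $3\le i\le n-2$. Since $S$ contains only cubic vertices and the cubic set of a twin is the disjoint union of the cubic vertices of its two tetrahedra, one has $|S\cap C|=l(w_i)+l(w_{i+1})$ for each twin $(T_i,T_{i+1})$, $|S\cap C|=l(w_1)=l(w_n)=2$ for the Type I ends, and $|S\cap C|=l(w_i)\ge 1$ for each Type II internal tetrahedron. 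The alternating choice guarantees no two consecutive labels are both $1$, so $l(w_i)+l(w_{i+1})\ge 3$ on the $|C|=4$ twins and $=4$ on the two $|C|=5$ end twins; this makes all three hypotheses of Lemma \ref{ubd} hold, so $S$ is an \textbf{ERS}. A parity count (for odd $n$ the block $3\le i\le n-2$ has odd length, hence $\tfrac{n-3}{2}$ ones) gives $|S|=\sum_i l(w_i)=\tfrac{3(n+1)}{2}$, matching the lower bound.

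The hard part will be the bookkeeping at the two ends, since unlike the even case the extremities are of different kinds: a size-$5$ twin at one end and a lone Type I tetrahedron at the other. I must verify that $T_n$ is genuinely Type I and that combining Lemma \ref{lbd} with Remark \ref{k4lbd} over pairwise disjoint cubic sets is legitimate, so the summed bound is valid. I would also check the smallest cases separately: $CS_1=K_4$ has all four vertices cubic and so does not fit the Type I/II dichotomy, and should be confirmed directly to have $\dim_E=3$; and $n=3$ should be checked to confirm that the degenerate labeling (all labels equal to $2$) behaves as claimed.
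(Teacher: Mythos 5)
Your proposal is correct and follows essentially the same route as the paper: a lower bound from Lemma \ref{lbd} and Remark \ref{k4lbd} applied to a partition into edge-disjoint twin tetrahedrons plus one leftover tetrahedron, and the identical labeling $l(w_1)=l(w_2)=l(w_{n-1})=l(w_n)=2$ with alternating $1,2$ in between, verified via Lemma \ref{ubd}. The only (immaterial) difference is that you place the unpaired tetrahedron at the end of the chain (Type I, contributing $2$, with a single size-$5$ twin), whereas the paper places it in the interior (Type II, contributing $1$, with two size-$5$ end twins); both counts give $\frac{3(n+1)}{2}$, and your explicit treatment of the small cases $n=1,3$ is a welcome addition the paper omits.
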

	\begin{proof}
		$CS_n$ with $n$ odd, has two twin tetrahedrons with five cubic vertices, $\frac{n-1}{2}-2$ twin tetrahedrons with four cubic vertices and a single tetrahedron with two cubic vertices. By Lemma \ref{lbd}, $\dim_E(CS_n) \geq \frac{3(n+1)}{2}$.\\
		Define a mapping $l: \{w_1, w_2, \ldots, w_n\} \rightarrow \{1,2\}$ as follows, $$l(w_1)=l(w_2)=l(w_{n-1})=l(w_n)=2$$ and for $3\leq i \leq n-2$, $l(w_i)= \begin{cases}
			1, & i \equiv 1\ {\rm mod}\ 2\\
			2, & i \equiv 0\ {\rm mod}\ 2
		\end{cases}$. Here each $w_i$ represents a tetrahedron of $CS_n$. Let $S$ be a set of 3-degree vertices with $|S \cap R_i| =l(w_i)$, where $R_i$ is the set of vertices of the tetrahedron corresponding to $w_i$. Since $S$ satisfies the conditions in Lemma \ref{ubd}, and $|S|= \frac{3(n+1)}{2}$, it follows that $\dim_E \leq \frac{3(n+1)}{2}$.
	\end{proof}
	\begin{thm}
		For any even integer $n \geq 4$, $\dim_E(CC_n)= \frac{3n}{2}$.
	\end{thm}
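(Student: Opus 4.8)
The plan is to prove equality by establishing matching lower and upper bounds of $\frac{3n}{2}$, leaning entirely on the two structural lemmas already available. First I would fix notation for $CC_n$: label the $n$ tetrahedra $T_1,\dots,T_n$ cyclically so that $T_i$ and $T_{i+1}$ (indices mod $n$) meet at a single shared corner $s_i$. Every tetrahedron then has exactly two corners of degree $6$ (those shared with its two neighbours) and exactly two free corners of degree $3$; consequently every tetrahedron of $CC_n$ is of Type II and no Type I tetrahedron occurs, and the only twin tetrahedrons are the adjacent pairs $(T_i,T_{i+1})$. This uniformity is what distinguishes the cyclic case from the chain $CS_n$, where the two end tetrahedra carry extra cubic vertices.

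For the lower bound I would use that $n$ is even to pair the tetrahedra as $(T_1,T_2),(T_3,T_4),\dots,(T_{n-1},T_n)$, yielding $\frac{n}{2}$ edge-disjoint twin tetrahedrons. Each such twin is a one-point union whose cubic set $C_j$ consists of the four free corners of its two constituents, so $|C_j|=4$, and the $C_j$ are pairwise disjoint since free corners are never shared. Lemma \ref{lbd} gives $|C_j\setminus S|\le 1$, hence $|S\cap C_j|\ge 3$ for each $j$, and summing over the $\frac{n}{2}$ twins yields $\dim_E(CC_n)=|S|\ge 3\cdot\frac{n}{2}=\frac{3n}{2}$.

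For the upper bound I would construct an explicit $S$ and check the sufficient conditions of Lemma \ref{ubd}. Write $F_i$ for the two free corners of $T_i$ and $a_i=|S\cap F_i|\in\{0,1,2\}$. Because all tetrahedra are Type II, condition (ii) is vacuous, condition (iii) demands $a_i\ge 1$ for every $i$, and condition (i), applied to the adjacent pairs that are exactly the twin tetrahedrons of $CC_n$, demands $a_i+a_{i+1}\ge 3$ cyclically. Minimising $\sum_i a_i$ subject to $a_i\in\{1,2\}$ and no two cyclically consecutive $a_i$ equal to $1$ amounts to choosing the indices with $a_i=1$ as a maximum set with no two cyclically adjacent, of size $\frac{n}{2}$ when $n$ is even; concretely I would set $a_i=2$ for odd $i$ and $a_i=1$ for even $i$, giving $|S|=\frac{n}{2}\cdot 2+\frac{n}{2}\cdot 1=\frac{3n}{2}$.

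The step needing genuine care is verifying that this alternating assignment closes up correctly around the cycle: condition (i) must also hold for the wrap-around pair $(T_n,T_1)$, where $a_n=1$ and $a_1=2$ give $a_n+a_1=3$ precisely because $n$ is even, whereas an odd $n$ would force two consecutive $1$'s and break the pattern. This is exactly the parity point that separates the even and odd cases. Once it is confirmed that every adjacent pair satisfies $a_i+a_{i+1}\ge 3$ and every $a_i\ge 1$, the set $S$ meets all three conditions of Lemma \ref{ubd} and is therefore an \textbf{ERS} on $\frac{3n}{2}$ vertices, so $\dim_E(CC_n)\le\frac{3n}{2}$; combined with the lower bound this gives $\dim_E(CC_n)=\frac{3n}{2}$.
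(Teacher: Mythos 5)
Your proposal is correct and follows essentially the same route as the paper: the lower bound comes from partitioning $CC_n$ into $\frac{n}{2}$ edge-disjoint twin tetrahedrons with disjoint four-element cubic sets and applying Lemma \ref{lbd}, and the upper bound comes from an alternating $1,2$ assignment of chosen cubic vertices verified against Lemma \ref{ubd} (the paper puts the $1$'s on odd indices and the $2$'s on even, which is just a relabeling of yours). Your explicit check that condition (i) closes up around the cycle for the wrap-around pair is a welcome elaboration of a step the paper leaves implicit, but it is not a different argument.
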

	\begin{proof}
		For even values of $n$, $CC_n$ has $\frac{n}{2}$ twin tetrahedrons that are edge-disjoint and each cubic set has four vertices. Thus by Lemma \ref{lbd}, $\dim_E \geq \frac{3n}{2}$.\\
		Consider $CC_n$ as a cycle on $n$ vertices $\{w_1, w_2, \ldots, w_n\}$. Define a mapping $l: \{w_1, w_2, \ldots, w_n\} \rightarrow \{1,2\}$ as follows, for $1\leq i \leq n$, $l(w_i)= \begin{cases}
			1, & i \equiv 1\ {\rm mod}\ 2\\
			2, & i \equiv 0\ {\rm mod}\ 2
		\end{cases}$.
		Let $S$ be a set of 3-degree vertices with $|S \cap R_i| =l(w_i)$, where $R_i$ is the set of vertices of the tetrahedron corresponding to $w_i$. By Lemma \ref{ubd}, $S$ resolves the edges of $CC_n$.
	\end{proof}
	\begin{thm}
		For any odd integer $n \geq 3$, $\dim_E(CC_n)= \frac{3(n+1)}{2}-1$.
	\end{thm}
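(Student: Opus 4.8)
The plan is to treat $CC_n$ as $n$ tetrahedra $T_1,\dots,T_n$ arranged cyclically, where $T_i$ and $T_{i+1}$ meet in a single shared corner $c_i$ (indices mod $n$). Each $c_i$ has degree $6$, while the two remaining vertices $a_i,b_i$ of $T_i$ have degree $3$; thus the $2n$ cubic vertices are exactly the pairs $\{a_i,b_i\}$, every $T_i$ is a Type II tetrahedron, and every consecutive pair $T_i\cup T_{i+1}$ is a twin tetrahedron with cubic set $C=\{a_i,b_i,a_{i+1},b_{i+1}\}$. First I would record these $n$ twins, since the decisive point is that the hypotheses of Lemma \ref{lbd} apply to all of them simultaneously, not merely to the $\frac{n-1}{2}$ twins of any one edge-disjoint decomposition.

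For the lower bound, let $S$ be an \textbf{ERS} and set $m_i=|\{a_i,b_i\}\setminus S|\in\{0,1,2\}$. Applying Lemma \ref{lbd} to the twin $T_i\cup T_{i+1}$ gives $m_i+m_{i+1}=|C\setminus S|\le 1$ for every $i$ (cyclically). In particular each $m_i\le 1$, and whenever $m_i=1$ we must have $m_{i-1}=m_{i+1}=0$; hence $J=\{i:m_i=1\}$ is an independent set in the $n$-cycle on the indices $\{1,\dots,n\}$. Since $n$ is odd, any such independent set has at most $\frac{n-1}{2}$ elements, so $\sum_i m_i=|J|\le\frac{n-1}{2}$. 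The number of cubic vertices lying in $S$ is $2n-\sum_i m_i\ge 2n-\frac{n-1}{2}=\frac{3n+1}{2}$, whence $|S|\ge\frac{3n+1}{2}=\frac{3(n+1)}{2}-1$.

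For the matching upper bound I would mimic the labeling used in the even case but break the symmetry at the seam forced by the odd length. Define $l:\{w_1,\dots,w_n\}\to\{1,2\}$ by $l(w_i)=1$ for odd $i$ with $i\le n-2$, $l(w_i)=2$ for even $i$, and $l(w_n)=2$. Choose a set $S$ of cubic vertices with $|S\cap R_i|=l(w_i)$, i.e.\ include both of $a_i,b_i$ when $l(w_i)=2$ and exactly one when $l(w_i)=1$; then $J=\{i:l(w_i)=1\}=\{1,3,\dots,n-2\}$ is independent in the $n$-cycle, so for every twin $|C\setminus S|=m_i+m_{i+1}\le 1$, every Type II tetrahedron satisfies $|S\cap C|\ge 1$, and condition (ii) of Lemma \ref{ubd} is vacuous as $CC_n$ has no Type I tetrahedron. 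Lemma \ref{ubd} then certifies that $S$ is an \textbf{ERS}, and $|S|=\sum_i l(w_i)=\frac{3(n+1)}{2}-1$, giving $\dim_E(CC_n)\le\frac{3(n+1)}{2}-1$.

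The only genuinely delicate point is the lower bound: a naive partition into $\frac{n-1}{2}$ disjoint twins plus one leftover Type II tetrahedron yields only $3\cdot\frac{n-1}{2}+1=\frac{3n-1}{2}$, which is one short. The missing unit comes precisely from the parity obstruction, namely that the maximum independent set of an odd $n$-cycle has size $\frac{n-1}{2}$ rather than $\frac{n}{2}$, so the main work is to phrase the twin constraints as a single global independent-set bound over all $n$ twins and then invoke the odd-cycle estimate.
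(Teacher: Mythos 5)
Your proof is correct, and while your upper bound is exactly the paper's construction ($l(w_i)=1$ for odd $i\le n-2$, $l(w_i)=2$ otherwise, verified through Lemma \ref{ubd}), your lower bound takes a genuinely different and in fact stronger route. The paper decomposes $CC_n$ into $\frac{n-1}{2}$ edge-disjoint twin tetrahedra plus one leftover Type II tetrahedron and writes ``Thus, $\dim_E(CC_n)\geq \frac{3(n+1)}{2}-1$''; but, as you point out, that decomposition only certifies $3\cdot\frac{n-1}{2}+1=\frac{3n-1}{2}$, which is one short of the claimed bound, so the paper's lower-bound argument has a one-unit gap as written. You close that gap by applying Lemma \ref{lbd} to all $n$ overlapping twins $T_i\cup T_{i+1}$ simultaneously, encoding the constraints as $m_i+m_{i+1}\le 1$ around the index cycle, and invoking the fact that an independent set in an odd cycle has at most $\frac{n-1}{2}$ vertices to get $\sum_i m_i\le\frac{n-1}{2}$ and hence $|S|\ge 2n-\frac{n-1}{2}=\frac{3(n+1)}{2}-1$. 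What your approach buys is precisely the parity obstruction that the disjoint-partition count cannot see; what the paper's approach buys is brevity, but at the cost of an argument that does not actually reach the stated bound. Your version is the one that should be kept (and the analogous global reading of the twin constraints is what one would also want to check in the odd chain-silicate case).
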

	\begin{proof}
		In $CC_n$, when $n$ is odd, we have $\frac{n-1}{2}$ twin tetrahedron with four cubic vertices and a single tetrahedron. Thus, $\dim_E(CC_n)\geq \frac{3(n+1)}{2}-1$.\\
		Consider $CC_n$ as a cycle on $n$ vertices $\{w_1, w_2, \ldots, w_n\}$. Define a mapping $l: \{w_1, w_2, \ldots, w_n\} \rightarrow \{1,2\}$ as follows, for $1\leq i \leq n-1$, $l(w_i)= \begin{cases}
			1, & i \equiv 1\ {\rm mod}\ 2\\
			2, & i \equiv 0\ {\rm mod}\ 2
		\end{cases}$, and $l(w_n)=2$. Consider a collection of 3-degree vertices S, with the property that $|S \cap R_i| =l(w_i)$, where $R_i$ is the set of vertices of the tetrahedron corresponding to $w_i$. By Lemma \ref{ubd}, $S$ is an edge resolving set of $CC_n$.
	\end{proof}
	
	\section{Conclusion}
	Silicate networks are prevalent in nature, forming the backbone of many minerals in the Earth's crust. They also have significant industrial applications, particularly in the production of glasses, ceramics, and construction materials. Understanding the structure and properties of silicate networks is essential for designing and engineering materials with specific functionalities and characteristics. This study has identified the necessary vertices to resolve the edges in cyclic and chain silicate networks. Future works can focus on this parameter for other types of silicate networks.
	
	\section*{Acknowledgements}
	The authors are thankful to Rajalakshmi Educational Trust for granting financial support under Centre for Sponsored Research and Consultancy/CSRC84.
	\bibliographystyle{amsplain}
	\nocite{}
	\bibliography{biblio}
	
\end{document}